\newtheorem{thm}{Theorem}[section]
\newtheorem{main-thm}{Main Theorem}
\newtheorem{lem}[thm]{Lemma}
\newtheorem{prop}[thm]{Proposition}
\theoremstyle{definition}
\newtheorem{defn}[thm]{Definition}
\newtheorem{prob}{Problem}
\newtheorem{notn}[thm]{Notation}
\theoremstyle{remark}
\numberwithin{equation}{section}
\begin{document}

\title{The simple complexity of a Riemann surface}
\author{Aldo-Hilario Cruz-Cota and Teresita Ramirez-Rosas}
\address{Department of Mathematics, Grand Valley State University, Allendale, MI 49401-9401, USA}
\email{cruzal@gvsu.edu}
\keywords{Riemann surfaces, complexity of branched covers, Hurwitz problem}
\address{Department of Mathematics, Grand Valley State University, Allendale, MI 49401-9401, USA}
\email{ramirezt@gvsu.edu}

\date{\today}

\dedicatory{}

\begin{abstract}

\noindent Given a Riemann surface $M$, the \emph{complexity} of a branched cover of $M$ to the Riemann sphere $S^2$, of degree $d$ and with branching set of  cardinality $n \geq 3$, is defined as $d$ times the hyperbolic area of the complement of its branching set in $S^2$. A branched cover $p \colon M \to S^2$ of degree $d$ is \emph{simple} if the cardinality of the pre-image $p^{-1}(y)$ is at least $d-1$ for all $y \in S^2$. The  \emph{(simple) complexity} of $M$ is defined as the infimum of the complexities of all (simple) branched covers of $M$  to $S^2$. We prove that if $M$ is a closed, connected,  orientable Riemann surface of genus $g \geq 1$, then: (1) its simple complexity equals $8\pi g$, and (2) its complexity equals $2\pi(m_{\text{min}}+2g-2)$, where $m_{\text{min}}$ is the minimum total length of a branch datum realizable by a branched cover  $p \colon M \to S^2$.
\end{abstract}

\maketitle

\section{Introduction}

In this paper we study the simplest way in which a given Riemann surface can cover the Riemann sphere. The cover is allowed to have branching points and the simplicity of the branched cover is defined in terms of a complexity function. 

Consider a branched cover of a Riemann surface $M$  to the Riemann sphere $S^2$. Let $B \subset S^2$ be the branching set of this cover, and let $X$ be the complement of $B$ in $S^2$. Then $X$ has a natural Riemann surface structure (as a domain of the Riemann sphere $S^2$). We are interested in studying the type of  branched covers described above in which $X$ is a hyperbolic Riemann surface.

The complexity of a branched cover  of a Riemann surface $M$  to the Riemann sphere $S^2$ is defined as the product of the  degree of the cover and the hyperbolic area of the Riemann surface $X$. But, in order for this definition to make sense, we require that the Riemann surface $X$ be hyperbolic, or, equivalently, that the cardinality of $B$ be  greater than or equal to three. [This will be assumed in the paper after Definition \ref{defn-complex}.] 

We first learned of the previous definition of complexity of a branched cover in an unpublished paper by Rieck and Yamashita (\cite{Rieck-Yamashita}), although these authors define  it in the context of branched covers of $3$-manifolds.

We define the \emph{complexity} of a Riemann surface $M$ as the infimum of the complexities of all branched covers of $M$  to $S^2$. The main purpose of this paper is to study the following problem.

\begin{prob} \label{prob-complex}
Find the complexity of a Riemann surface $M$.
\end{prob}

By the Gauss-Bonnet theorem (see Lemma \ref{lem-area-complex}), the complexity of a branched cover of $M$ to $S^2$ of degree $d \geq 1$ and with branching set of cardinality $n \geq 3$ is equal to $2\pi d (n-2)$. Therefore, the set of complexities  of all branched covers of $M$ to  $S^2$ is a subset of $\mathbb{Z} \pi$, the set of all integer multiples of $\pi$, which is a discrete subset of the real numbers.  Thus, the complexity of $M$ is actually equal to the complexity of a branched cover of $M$ to  $S^2$. In other words, Problem \ref{prob-complex} is equivalent to the following.

\begin{prob} \label{prob-complex-min}
Given a Riemann surface $M$, find a branched cover of $M$ to  $S^2$ with minimal complexity.
\end{prob}

Although the complexity of a branched cover is given by a simple formula, Problem \ref{prob-complex-min} is difficult because, given a Riemann surface $M$, there are no known sufficient conditions for the existence of a branched cover of $M$ to $S^2$. Finding such conditions amounts to solving the (still open) Hurwitz problem, which we discuss below.

A \emph{branch datum} is a $4$-tuple $(M,n,d,\Pi)$ such that:
\begin{itemize}
 \item $M$ is a Riemann surface,
 \item $n \geq 0$ and $d \geq 1$ are integers,
 \item $\Pi$ is a collection of $n$ partitions of the integer $d$. 
\end{itemize}

The \emph{total length} of a branch datum $(M,n,d,\Pi)$ is defined as the sum of the lengths of the partitions in the collection $\Pi$.

A branched cover $p \colon M \to S^2$  naturally gives rise to a branch datum $(M,n,d,\Pi)$. Here, $n$ is the cardinality of the branching set $B \subset S^2$, $d$ is the degree of the cover, and $\Pi=(\Pi_1,\Pi_2,\cdots,\Pi_n)$ is the collection such that $\Pi_i$ is the partition of $d$ given by the degrees of the points on the pre-image of the $i^{th}$ branch point in $B$. A branch datum $(M,n,d,\Pi)$ is called \emph{realizable} if it is associated to a branched cover $p \colon M \to S^2$.

Let  $(M,n,d,\Pi)$ be a realizable branch datum, with $\Pi=(\Pi_1,\Pi_2,\cdots,\Pi_n)$. Define $m_i$ as the length of the partition $\Pi_i$ and let $m=\sum_{i=1}^n m_i$. By Theorem \ref{RH-form-alt}, the Riemann-Hurwitz formula becomes: 

\begin{equation} \label{RH-form}
 \chi(M)-m=d(\chi(S^2)-n).
\end{equation}

A branch datum satisfying \eqref{RH-form} is called \emph{compatible}. Thus, every  realizable branch datum is compatible, but the converse is not true (see \cite[Corollary 6.4]{Edmonds}). The classical Hurwitz problem asks  which compatible branch data are actually realizable.

The Hurwitz problem stated above has a natural generalization in which we consider branched covers of an arbitrary surface, not necessarily the sphere $S^2$ (see Section 1 of \cite{Pervova-Petronio-I} for precise statements). This more general problem was studied first by Hurwitz (\cite{Hurwitz}), and more recently by many authors, such as \cite{Baranski}, \cite{Edmonds}, \cite{Gersten}, \cite{Husemoller}, \cite{Pervova-Petronio-I}, \cite{Pervova-Petronio-II} and \cite{Pakovich}. All instances of the general Hurwitz problem either have been solved or can be reduced to the version of the problem stated above, which is the only one that remains open (see Section 2 of \cite{Pervova-Petronio-II}). 

A branched cover $p \colon M \to S^2$ of degree $d$ is \emph{simple} if the cardinality of the pre-image $p^{-1}(y)$ is at least $d-1$ for all $y \in S^2$. We define the \emph{simple complexity} of a Riemann surface $M$ as the infimum of the complexities of all simple branched covers of $M$  to $S^2$. We now state our two main results.

\begin{main-thm} \label{main-thm-intro-1}
 Let $M$ be a connected, closed, orientable Riemann surface of genus $g \geq 1$. Then the simple complexity of $M$ is equal to $8\pi g$.
\end{main-thm}

Main Theorem \ref{main-thm-intro-1} gives an explicit formula for the simple complexity of a Riemann surface in terms of its genus. On the other hand, Main Theorem \ref{main-thm-intro-2} below is not quite explicit: it gives a formula for the complexity of a Riemann surface, but this formula is in terms of an integer  that is difficult to find. More precisely, we have the following.

\begin{main-thm} \label{main-thm-intro-2}
Let $M$ be a connected, closed, orientable Riemann surface of genus $g \geq 1$. Let $m_{\text{min}}$ be the minimum total length of a branch datum realizable by a branched cover  $p \colon M \to S^2$. Then the complexity of $M$ is equal to $2\pi(m_{\text{min}}+2g-2)$.
\end{main-thm}

Given a Riemann surface $M$, it is very difficult to find an explicit formula for the integer $m_{\text{min}}$ from the statement of Main Theorem \ref{main-thm-intro-2}. The reason for this is that we do not know which branch data are realizable (by a branched cover of $M$ to $S^2$), which amounts to the fact that the Hurwitz problem is still open. 

\subsection{Acknowledgements}
The authors would like to thank Daryl Cooper and Ilesanmi Adeboye for  helpful discussions. 

\section{Preliminaries}

\begin{defn} \label{Riemann-surf} 
A \emph{Riemann surface} is a complex manifold of complex dimension one.
\end{defn}

Throughout this paper, $M$ will denote a Riemann surface and $S^2$ will denote the Riemann sphere. All Riemann surfaces in this paper will be connected, closed and orientable. We denote the Euler characteristic of a Riemann surface $M$ by $\chi(M)$.

\begin{defn} \label{defn-branch-cover}
A \emph{branched cover} is a non-constant holomorphic map between Riemann surfaces.
\end{defn}

The following proposition is well-known (see Propositions $5$ and $6$ in Chapter $4$ of \cite{Donaldson}).

\begin{prop} (\cite{Donaldson}) \label{prop-branch-cover}
 Let $p \colon M \to N$ be a branched cover between Riemann surfaces. 
\begin{enumerate}
 \item Then for each $x \in M$ there is a unique integer $k=k_x \geq 1$ such that we can find charts around $x$ in $M$ and $p(x)$ in $N$ in which the map $p$ is represented by the map $z \mapsto z^{k}$.
 \item Let $R$ be the set of all points $x$ in $M$ such that $k_x>1$. Then the set $R$ is finite.
 \item For each $y \in N$, the pre-image $p^{-1}(y)$ is a finite subset of $M$.
\end{enumerate}
\end{prop}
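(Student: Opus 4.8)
The plan is to prove part (1) first, since parts (2) and (3) then follow quickly from it together with the compactness of $M$. For (1), I would fix holomorphic charts $\phi \colon U \to \mathbb{C}$ around $x$ and $\psi \colon V \to \mathbb{C}$ around $p(x)$ with $\phi(x)=0$, $\psi(p(x))=0$, and $p(U) \subseteq V$. Setting $f = \psi \circ p \circ \phi^{-1}$, this is holomorphic on a neighborhood of $0$ in $\mathbb{C}$ with $f(0)=0$. Since $p$ is non-constant and $M$ is connected, the identity theorem forces $f \not\equiv 0$ (otherwise $p$ would be locally constant, hence constant on all of $M$), so there is a unique integer $k \geq 1$ and a holomorphic $g$ near $0$ with $g(0) \neq 0$ such that $f(z) = z^{k} g(z)$; here $k = k_x$ is the order of vanishing of $f$ at $0$. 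Shrinking $U$ so that $g$ is non-vanishing on a disk $D$ about $0$, and using that $D$ is simply connected, I can choose a holomorphic branch $h$ of $g^{1/k}$ on $D$, so that $f(z) = (z h(z))^{k}$. The function $\eta(z) := z h(z)$ satisfies $\eta(0)=0$ and $\eta'(0) = h(0) \neq 0$, so by the holomorphic inverse function theorem $\eta$ is a biholomorphism of a smaller neighborhood of $0$ onto its image; replacing $\phi$ by $\eta \circ \phi$ yields charts in which $p$ is $w \mapsto w^{k}$. For uniqueness of $k$ I would note that $k$ is characterized intrinsically, e.g. as the number of preimages near $x$ of a generic nearby value (the local degree), independent of the charts chosen.

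For (2), in the normal form $z \mapsto z^{k}$ from (1) the derivative $k z^{k-1}$ vanishes only at $z=0$, so $x$ has a neighborhood in which it is the only point of $R$; hence $R$ is discrete (equivalently, $R$ is the zero set of the locally defined, not identically zero, holomorphic derivative of $p$, which is discrete by the identity theorem). Since $M$ is closed, hence compact, a discrete subset is finite. For (3), fix $y \in N$: the set $p^{-1}(y)$ is closed in $M$ as the preimage of a point under a continuous map, hence compact, and it is discrete because the normal form $z \mapsto z^{k}$ at each $x \in p^{-1}(y)$ shows that $x$ is the only preimage of $y$ in a neighborhood of $x$; a compact discrete set is finite.

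The main obstacle is the construction of the \emph{local normal form} in (1): specifically the extraction of a holomorphic $k$-th root $h$ of the non-vanishing function $g$ on a simply connected neighborhood (which rests on the existence of a holomorphic logarithm there), and the verification that the induced change of coordinate $\eta$ is a genuine biholomorphism near $0$. Once (1) is in hand, parts (2) and (3) are short deductions combining the identity theorem with the compactness of $M$.
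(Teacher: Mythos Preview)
Your argument is correct and is the standard textbook derivation of the local normal form, followed by the usual discreteness-plus-compactness deductions for (2) and (3). The paper itself does not supply a proof of this proposition: it simply quotes the result from Donaldson (Propositions~5 and~6 in Chapter~4), so there is no in-paper argument to compare against; your write-up is in fact essentially the proof one finds in that reference.
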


\begin{defn}
With the notation of Proposition \ref{prop-branch-cover}, 
\begin{itemize}
  \item The set $R$ is called the \emph{ramification set} of the branched cover $p$. A point in $R$ is called a \emph{ramification point} of $p$.
 \item The set $B=p(R)$ is called the \emph{branching set} of the branched cover $p$. This set is always finite by Proposition \ref{prop-branch-cover} (2). A point in $B$ is called a \emph{branch point} of $p$.
 \item For each $x \in M$, the integer $k_x \geq 1$ is called the \emph{ramification index} of $x$. 
 \item The \emph{total ramification index} of the branched cover $p$ is defined to be \[\mathcal{R}_p=\displaystyle \sum_{x\in X} (k_x-1).\] [This is really a finite sum by Proposition \ref{prop-branch-cover} (2).]
\end{itemize}
\end{defn}

Given a branched cover $p \colon M \to N$, there is an integer $d \geq 1$ such that every point in $N$ has exactly $d$ pre-images, if we count pre-images with appropriate multiplicities. More precisely, 

\begin{lem} (\cite{Donaldson})
\label{lem-counting mult} Let $p \colon M \to N$ be a branched cover between Riemann surfaces. 
For each $y \in N$, we define the integer  \[d(y)=\displaystyle \sum_{x\in p^{-1}(y)} k_x.\] Then the integer $d(y)$ does not depend on $y$. This integer will be called the \emph{degree} of the branched cover $p$.
\end{lem}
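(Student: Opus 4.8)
The plan is to prove that the integer-valued function $y \mapsto d(y)$ is \emph{locally constant} on $N$; since $N$ is connected (Riemann surfaces in this paper being connected, closed and orientable), a locally constant function on $N$ is constant, and the lemma follows. The essential input beyond the local normal form of Proposition \ref{prop-branch-cover}(1) is that $M$ is compact, which forces $p$ to be a closed map; this is what rules out pathologies such as an open inclusion of one surface into another, for which $d(y)$ would jump.

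Fix $y_0 \in N$ and write $p^{-1}(y_0) = \{x_1, \dots, x_r\}$, a finite set by Proposition \ref{prop-branch-cover}(3), with ramification indices $k_j := k_{x_j}$. By Proposition \ref{prop-branch-cover}(1), for each $j$ we may pick a coordinate disk $D_j$ centered at $x_j$ and a coordinate disk around $y_0$ in which $p|_{D_j}$ is the power map $z \mapsto z^{k_j}$; after shrinking, we may take the $D_j$ pairwise disjoint and all mapping into one common coordinate disk $W$ around $y_0$. Next I would produce a neighborhood $V$ of $y_0$ whose full preimage lies in $\bigcup_j D_j$, and this is where compactness enters: the set $C = M \setminus \bigcup_j D_j$ is closed in the compact surface $M$, hence compact, so $p(C)$ is compact and therefore closed in $N$, and $y_0 \notin p(C)$ since every preimage of $y_0$ lies in some $D_j$. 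Thus $V := W \setminus p(C)$ is an open neighborhood of $y_0$ with $p^{-1}(V) \subseteq \bigcup_j D_j$, so that $p^{-1}(V) = \bigsqcup_j \bigl(D_j \cap p^{-1}(V)\bigr)$.

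It then remains to compute $d(y)$ for $y \in V$ disk by disk using the normal form on each $D_j$: the preimages of $y$ in $D_j \cap p^{-1}(V)$ are the $k_j$-th roots of the relevant coordinate value, and whether $y = y_0$ (one preimage of ramification index $k_j$) or $y \neq y_0$ ($k_j$ simple preimages), the partial sum $\sum_{x \in D_j \cap p^{-1}(y)} k_x$ equals $k_j$ in either case. Summing over $j$ gives $d(y) = \sum_{j=1}^{r} k_j = d(y_0)$ for every $y \in V$, which establishes local constancy and finishes the proof. The one genuinely delicate point is the preimage-containment step: it is exactly there that the closedness (compactness) of $M$ is indispensable, and I would make sure to flag that hypothesis explicitly.
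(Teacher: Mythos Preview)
Your argument is correct and is essentially the standard proof one finds in textbooks such as Donaldson's. The paper itself does not supply a proof of this lemma: it is stated with a citation to \cite{Donaldson} and used as a black box, so there is no ``paper's own proof'' to compare against. Your write-up is a faithful reconstruction of that cited argument, correctly isolating the two key ingredients---the local normal form of Proposition~\ref{prop-branch-cover}(1) and the compactness of $M$ (used to guarantee that $p$ is a closed map, so that one can trap the full preimage of a small neighborhood inside the chosen coordinate disks). One tiny point worth tightening: when you say the $D_j$ ``all map into one common coordinate disk $W$,'' you should arrange (by shrinking) that in fact $p(D_j) = W$ for every $j$, so that each $y \in V \subseteq W$ genuinely has $k_j$ preimages in $D_j$; otherwise some $y \in V$ might lie outside $p(D_j)$ and the root count would fail.
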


\section{The Riemann-Hurwitz formula and the Gauss-Bonnet theorem}

We now state the classical Riemann-Hurwitz formula.
\begin{thm} \label{RH-form-gen}  (\cite[Theorem 2.5.2]{Jost}) Let $p \colon M \to N$ be a branched cover of degree $d$ between Riemann surfaces. Suppose that the genera of $M$ and $N$ are $g_M$ and $g_N$, respectively. Let $\mathcal{R}_p$ be the total ramification index of the branched cover $p$. Then \[2-2g_M=d(2-2g_N)-\mathcal{R}_p.\]
\end{thm}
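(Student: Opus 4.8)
The plan is to prove the formula by comparing suitable triangulations of $M$ and $N$. First I would invoke the standard fact that the closed surface $N$ admits a triangulation, and then refine it (if necessary) so that every branch point of $p$ occurs as a vertex; this is harmless since the branching set $B = p(R) \subseteq N$ is finite by Proposition \ref{prop-branch-cover}(2). Write $V$, $E$, $F$ for the numbers of vertices, edges and faces of this triangulation, so that $\chi(N) = V - E + F = 2 - 2g_N$.

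Next I would pull the triangulation back along $p$. Away from the ramification set $R$ the map $p$ restricts to an honest $d$-sheeted covering: indeed, by the local normal form $z \mapsto z^{k_x}$ of Proposition \ref{prop-branch-cover}(1), a point lies in $R$ precisely when $k_x > 1$, and $R$ is finite. Hence every open edge and every open face of the triangulation of $N$, being simply connected and disjoint from $B$, has exactly $d$ preimage cells in $M$, each mapped homeomorphically onto it; and near a ramification point $x$ the local model $z \mapsto z^{k_x}$ shows that the lifted cells still assemble into a genuine triangulation of $M$. Therefore the pulled-back triangulation of $M$ has $E' = dE$ edges and $F' = dF$ faces. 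Ramification enters only in the vertex count: for a vertex $y$ of $N$ we have $\#\, p^{-1}(y) = d - \sum_{x \in p^{-1}(y)}(k_x - 1)$, since $\sum_{x \in p^{-1}(y)} k_x = d$ by Lemma \ref{lem-counting mult}. Summing over all vertices $y$ of $N$, and noting that only the branch points contribute a nonzero correction term, I obtain $V' = dV - \mathcal{R}_p$.

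Finally I would assemble the Euler characteristics:
\[
\chi(M) = V' - E' + F' = (dV - \mathcal{R}_p) - dE + dF = d(V - E + F) - \mathcal{R}_p = d\,\chi(N) - \mathcal{R}_p,
\]
which is exactly the asserted identity $2 - 2g_M = d(2 - 2g_N) - \mathcal{R}_p$.

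The main obstacle I anticipate is entirely topological bookkeeping rather than anything deep: one must know that $N$ is triangulable with $B$ among its vertices, and — more delicately — that the preimage of such a triangulation is again an honest triangulation of $M$, not merely a CW-structure with possibly degenerate cells. Verifying the behaviour at ramification points, i.e.\ that the $k_x$ sheets coming together at $x$ glue into well-defined simplices, is the one step that genuinely uses the local normal form of Proposition \ref{prop-branch-cover}(1); everything after that is an elementary count of vertices, edges and faces.
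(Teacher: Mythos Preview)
Your proof is correct and is essentially the classical triangulation argument for the Riemann--Hurwitz formula. Note, however, that the paper does not supply its own proof of this theorem: it is quoted as \cite[Theorem 2.5.2]{Jost} and used as a black box. So there is nothing to compare against beyond observing that your argument is the standard one found in textbooks such as Jost's, and you have identified the only genuinely delicate point (that the pullback of the triangulation is again a triangulation, handled via the local normal form).
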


We will also use the following alternative way of stating the Riemann-Hurwitz formula for branched covers of the Riemann sphere.

\begin{thm} \label{RH-form-alt}  Let $M$ be a Riemann surface and let $p \colon M \to S^2$ be a branched cover of degree $d$. Suppose that branching set $B \subset S^2$ has cardinality $n$ and that the cardinality of the set $p^{-1}(B) \subset M$ is $m$. Then \[ \chi(M)-m=d(\chi(S^2)-n).\]
\end{thm}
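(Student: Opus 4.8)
The plan is to derive Theorem \ref{RH-form-alt} from the classical Riemann--Hurwitz formula (Theorem \ref{RH-form-gen}) by carefully relating the two ``correction terms'': the total ramification index $\mathcal{R}_p = \sum_{x \in M}(k_x - 1)$ on one side, and the deficiency $m - dn$ on the other, where $m = |p^{-1}(B)|$. First I would note that since $N = S^2$ here, we have $\chi(S^2) = 2 = 2 - 2g_N$ and $\chi(M) = 2 - 2g_M$, so Theorem \ref{RH-form-gen} reads $\chi(M) = d\,\chi(S^2) - \mathcal{R}_p$. Thus it suffices to show that $\mathcal{R}_p = m - dn$, i.e. that $\sum_{x \in M}(k_x - 1) = |p^{-1}(B)| - d\,|B|$.

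\textbf{Key steps.} The total ramification index is supported on the ramification set $R$, and $R \subseteq p^{-1}(B)$ by definition of $B = p(R)$; moreover any $x$ with $k_x > 1$ lies over some branch point, while points over $S^2 \setminus B$ are unramified (have $k_x = 1$). Hence I can split the sum over fibers: writing $p^{-1}(B)$ as the disjoint union $\bigsqcup_{y \in B} p^{-1}(y)$, I get
\[
\mathcal{R}_p = \sum_{x \in p^{-1}(B)} (k_x - 1) = \sum_{y \in B} \sum_{x \in p^{-1}(y)} (k_x - 1) = \sum_{y \in B}\left( \Big(\sum_{x \in p^{-1}(y)} k_x\Big) - |p^{-1}(y)| \right).
\]
By Lemma \ref{lem-counting mult}, the inner sum $\sum_{x \in p^{-1}(y)} k_x$ equals the degree $d$ for every $y$. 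Therefore $\mathcal{R}_p = \sum_{y \in B}\big(d - |p^{-1}(y)|\big) = d\,|B| - \sum_{y \in B} |p^{-1}(y)| = dn - m$, using $|B| = n$ and $\sum_{y\in B}|p^{-1}(y)| = |p^{-1}(B)| = m$. Substituting $\mathcal{R}_p = dn - m$ into $\chi(M) = d\,\chi(S^2) - \mathcal{R}_p$ gives $\chi(M) = d\,\chi(S^2) - dn + m$, which rearranges to $\chi(M) - m = d(\chi(S^2) - n)$, as desired.

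\textbf{Main obstacle.} This is essentially bookkeeping, so there is no deep obstacle; the one point that needs care is the claim that the ramification sum may be taken over $p^{-1}(B)$ rather than all of $M$ — equivalently, that $k_x = 1$ for every $x \notin p^{-1}(B)$. This follows because $x \in R \iff k_x > 1$, and $p(R) = B$, so $x \notin p^{-1}(B)$ forces $p(x) \notin B$, hence $x \notin R$, hence $k_x = 1$, contributing $0$ to the sum. I would state this explicitly to make the fiberwise decomposition rigorous. A secondary minor point is simply recording $\chi(S^2) = 2$ and $\chi(M) = 2 - 2g_M$ so that Theorem \ref{RH-form-gen} is literally in the form $\chi(M) = d\,\chi(S^2) - \mathcal{R}_p$ before the substitution.
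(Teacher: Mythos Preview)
Your argument is correct (apart from a harmless sign slip in the first paragraph, where you write ``it suffices to show that $\mathcal{R}_p = m - dn$''; the computation you actually carry out gives $\mathcal{R}_p = dn - m$, which is the right sign, and your final substitution uses this correct value). The fiberwise bookkeeping is sound, and the justification that the ramification sum can be restricted to $p^{-1}(B)$ is exactly what is needed.

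However, the paper proves this differently and more directly: rather than invoking Theorem~\ref{RH-form-gen} and then reverse-engineering $\mathcal{R}_p$, it simply deletes $B$ from $S^2$ and $p^{-1}(B)$ from $M$, observes that the restriction $p\colon M\setminus p^{-1}(B)\to S^2\setminus B$ is an honest unbranched cover of degree $d$, and uses multiplicativity of Euler characteristic for covers together with $\chi(M\setminus p^{-1}(B))=\chi(M)-m$ and $\chi(S^2\setminus B)=\chi(S^2)-n$. This is a one-line topological argument that does not need the Riemann--Hurwitz formula as input at all; in fact it is essentially an independent proof of Riemann--Hurwitz in this reformulated shape. Your route is perfectly valid but logically heavier, since it rests on Theorem~\ref{RH-form-gen} (whose standard proof already contains the idea the paper uses here). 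What your approach buys is an explicit identity $\mathcal{R}_p = dn - m$ tying the two formulations together, which is nice to have on record.
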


\begin{proof}
 Removing all the $n$ branch points from $S^2$ and all their $m$ pre-images from $M$, we obtain that $p$ restricts to a genuine cover  $p \colon M \setminus p^{-1}(B) \to S^2 \setminus B$ of degree $d$. Therefore, $\chi(M \setminus p^{-1}(B))=d(\chi(S^2 \setminus B))$, i.e., $\chi(M)-m=d(\chi(S^2)-n)$.
\end{proof}

From now on, we will exclusively study the special type of branched covers of the Riemann sphere that we define below.

\begin{defn}
A branched cover $p \colon M \to S^2$ of degree $d$ is \emph{simple} if the cardinality of the pre-image $p^{-1}(y)$ is at least $d-1$ for all $y \in S^2$.
\end{defn}

For simple branched covers of the Riemann sphere, the Riemann-Hurwitz formula simplifies to the following.

\begin{thm} \label{RH-form-sphere} Let $p \colon M \to S^2$ be a simple branched cover  of degree $d$ and with branching set of cardinality $n$. Let $g$ be the genus of $M$. Then  \[2-2g=2d-n.\]
\end{thm}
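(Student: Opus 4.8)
The plan is to derive Theorem~\ref{RH-form-sphere} as a direct specialization of the alternative Riemann-Hurwitz formula of Theorem~\ref{RH-form-alt}. Recall that Theorem~\ref{RH-form-alt} gives $\chi(M)-m = d(\chi(S^2)-n)$, where $m$ is the cardinality of $p^{-1}(B)$ and $n$ is the cardinality of the branching set $B$. Since $M$ is a closed orientable surface of genus $g$ we have $\chi(M)=2-2g$, and of course $\chi(S^2)=2$, so the formula reads $2-2g-m = d(2-n) = 2d - dn$. Thus it suffices to show that for a \emph{simple} branched cover one has $m = dn - n = n(d-1)$; substituting this into the displayed identity yields $2-2g - n(d-1) = 2d - dn$, i.e. $2-2g = 2d - dn + n(d-1) = 2d - n$, which is exactly the claim.

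So the crux is the combinatorial count $m = n(d-1)$. First I would fix a branch point $y \in B$ and examine its fiber $p^{-1}(y)$. By Lemma~\ref{lem-counting mult}, the ramification indices over $y$ satisfy $\sum_{x \in p^{-1}(y)} k_x = d$. On the other hand, simplicity says $|p^{-1}(y)| \geq d-1$ for every $y \in S^2$, in particular for $y \in B$. Since $y$ is a branch point, at least one $x$ in its fiber has $k_x \geq 2$, so $|p^{-1}(y)| \leq d-1$ (because if every point had index $1$ the sum would force $|p^{-1}(y)| = d$, contradicting the presence of a ramification point; and any point of index $\geq 2$ strictly decreases the count below $d$). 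Combining, $|p^{-1}(y)| = d-1$ exactly, which forces the partition of $d$ over $y$ to be $(2,1,1,\dots,1)$ with a single $2$. Summing $|p^{-1}(y)| = d-1$ over the $n$ branch points gives $\sum_{y \in B} |p^{-1}(y)| = n(d-1)$; but this sum is precisely $m = |p^{-1}(B)|$ because the fibers over distinct points are disjoint. Hence $m = n(d-1)$ as required.

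I expect no serious obstacle here: the only point demanding a little care is the argument that simplicity plus the presence of a ramification point pins the fiber size to exactly $d-1$ (not merely $\geq d-1$), which is the short case analysis on the partition $\sum k_x = d$ sketched above. One should also note explicitly that over branch points the deficiency $d - |p^{-1}(y)| = \sum_{x \in p^{-1}(y)}(k_x - 1)$ equals exactly $1$, so each branch point contributes total ramification index $1$; this is an alternative route that feeds directly into Theorem~\ref{RH-form-gen} via $\mathcal{R}_p = n$, giving $2-2g = 2d - n$ immediately. Either packaging works; I would present the $m = n(d-1)$ version to stay consistent with Theorem~\ref{RH-form-alt}, and remark on the $\mathcal{R}_p = n$ reformulation as it will likely be reused later in the paper.
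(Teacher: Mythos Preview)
Your proof is correct. The combinatorial core---that simplicity together with the presence of a ramification point forces each branch-point fiber to have exactly $d-1$ elements with partition $(2,1,\dots,1)$---is the same as in the paper. The only difference is in packaging: the paper feeds this observation into Theorem~\ref{RH-form-gen} by concluding $\mathcal{R}_p=n$ (each branch point contributes exactly one ramification point of index $2$, hence total ramification $n$), whereas your primary route feeds it into Theorem~\ref{RH-form-alt} by concluding $m=n(d-1)$. You in fact mention the $\mathcal{R}_p=n$ route as an alternative at the end, so you have covered the paper's argument as well; the two packagings are trivially equivalent given that Theorems~\ref{RH-form-gen} and~\ref{RH-form-alt} are equivalent forms of Riemann--Hurwitz.
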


\begin{proof} By Theorem \ref{RH-form-gen}, it suffices to prove that $\mathcal{R}_p=n$.
Combining the definition of a simple branched cover with Lemma \ref{lem-counting mult}, we obtain:
\begin{itemize}
 \item A point $x \in M$ is a ramification point of $p$ if and only if $k_x=2$.
 \item Given a  branch point $y \in_{n\geq2} S^2$, there exists a unique ramification point $x \in p^{-1}(y)$.
\end{itemize}
Thus, the cardinality of the ramification set $R$ equals the cardinality of the branching set $B$, which is $n$ by assumption. Therefore, \[\mathcal{R}_p=\displaystyle \sum_{x\in X} (k_x-1)=\displaystyle \sum_{x\in R} (k_x-1)=\displaystyle \sum_{x\in R} (2-1)=n.\]
\end{proof}

We recall the classical Gauss-Bonnet Theorem.
\begin{thm} (\cite[Theorem V.2.7]{Chavel}) \label{G-B}
For $M$ orientable with compact closure and smooth boundary, we have 
\[\int_{\partial M} \! \kappa_g \, \mathrm{d} s +\int_{M} \! K \, \mathrm{d} A=2\pi \chi(M)\]
[Here $K$ is the Gaussian curvature of $M$, $\kappa_g$ is the geodesic curvature of the boundary $\partial M$ of $M$, and $\chi(M)$ is the Euler characteristic of $M$.] 
 \end{thm}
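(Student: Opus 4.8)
The final statement is the classical Gauss--Bonnet theorem, which the paper quotes from Chavel; the proof I would give is the standard one, reducing the global identity to a purely local computation on a coordinate patch and then summing over a triangulation. First, on a coordinate disk $U\subseteq M$ I would fix a positively oriented orthonormal frame $(e_1,e_2)$ for the Riemannian metric and let $\omega$ be its connection $1$-form (so $\nabla e_1=\omega\,e_2$ and $\nabla e_2=-\omega\,e_1$); the second structure equation gives $\mathrm{d}\omega=-K\,\mathrm{d}A$. For a region $T\subseteq U$ bounded by a piecewise-smooth positively oriented simple closed curve $\gamma$, Stokes' theorem yields $\int_T K\,\mathrm{d}A=-\int_{\partial T}\omega$. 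Writing the unit tangent of $\gamma$ as $\cos\varphi\,e_1+\sin\varphi\,e_2$ along each smooth arc, one checks that $\kappa_g\,\mathrm{d}s=\mathrm{d}\varphi+\omega$ on $\gamma$, while the Hopf Umlaufsatz says the total turning of $\dot\gamma$ together with the exterior angles $\varepsilon_j$ at the corners of $\gamma$ adds up to $2\pi$. Combining these gives the local Gauss--Bonnet identity
\[
\int_T K\,\mathrm{d}A+\int_{\partial T}\kappa_g\,\mathrm{d}s+\sum_j\varepsilon_j=2\pi .
\]

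Second, I would choose a smooth triangulation of $\overline{M}$ fine enough that every triangle lies in some coordinate disk and such that $\partial M$ is a subcomplex, and then apply the local identity to each of the $F$ triangles and sum. The curvature integrals add up to $\int_M K\,\mathrm{d}A$. Every interior edge is traversed once in each direction by the two triangles sharing it, so its two $\kappa_g$-contributions cancel, leaving exactly $\int_{\partial M}\kappa_g\,\mathrm{d}s$ (here I use that $\partial M$ is smooth, so no corner terms survive along it). Finally the exterior-angle terms are reorganized into interior angles: at each interior vertex the surrounding interior angles sum to $2\pi$ and at each boundary vertex to $\pi$, and a count using $3F=2E_{\mathrm{int}}+E_{\partial}$ and $E_{\partial}=V_{\partial}$ collapses $\sum_{\text{triangles}}\bigl(2\pi-\sum(\text{interior angles})\bigr)$ to $2\pi(V-E+F)=2\pi\chi(M)$. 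Assembling the three pieces gives the theorem.

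The hard part is exactly the passage from local to global: the orthonormal frame and the connection form $\omega$ exist only on coordinate patches, so one cannot integrate $\mathrm{d}\omega=-K\,\mathrm{d}A$ over $M$ at once, and the real content lies in the careful bookkeeping of the multivalued angle functions $\varphi$ and in the combinatorial cancellation of the edge and vertex contributions across the triangulation. A secondary technical point, usually taken for granted, is the existence of a smooth triangulation subordinate to a coordinate cover and having $\partial M$ as a subcomplex. For the applications in this paper one only needs the constant-curvature cases ($K\equiv 0,\,+1,\,-1$) with geodesic boundary, where $\kappa_g\equiv 0$ and the formula reduces to $\int_M K\,\mathrm{d}A=2\pi\chi(M)$.
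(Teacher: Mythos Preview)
The paper does not give its own proof of this theorem: it is simply quoted, with citation, from Chavel's textbook, and is used later only in the trivial boundaryless, constant-curvature case to compute the area of an $n$-punctured sphere. So there is no in-paper argument to compare your proposal against.

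That said, your outline is the standard and correct proof: the local Gauss--Bonnet formula via an orthonormal frame, the structure equation $\mathrm{d}\omega=-K\,\mathrm{d}A$, Stokes on a triangle, the Umlaufsatz, and then the global step by summing over a triangulation with the usual cancellation of interior edges and the Euler-characteristic bookkeeping at vertices. Your identification of the two genuine technical points (frames only exist locally, and one needs a smooth triangulation subordinate to the cover with $\partial M$ as a subcomplex) is accurate, and your closing remark that the paper only uses the constant-curvature, empty-boundary special case is exactly right.
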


\section{$(d,n)$-branched covers of the Riemann sphere}
\begin{defn}
A branched cover $p \colon M \to S^2$ is said to be a \emph{$(d,n)$-branched cover} (of the Riemann sphere) if the following properties are satisfied:
\begin{itemize}
 \item The degree of $p$ is equal to $d$.
 \item The cardinality of the branching set of $p$ is equal to $n$
\end{itemize}
\end{defn}

Let $p \colon M \to S^2$ be a $(d,n)$-branched cover with branching set $B \subset S^2$. Then the surface $S^2 \setminus B$ has a natural structure of a Riemann surface (induced by the Riemann surface structure of $S^2$). This Riemann surface is hyperbolic if and only if $n \geq 3$ (\cite[Theorem 27.12]{Forster}). 

The definition of complexity of a branched cover below (Definition \ref{defn-complex}) require that the complement of the  branching set in the Riemann sphere admit a hyperbolic structure. This will be assumed for the rest of the paper. In other words, 

\begin{notn}
From now on, given a $(d,n)$-branched cover, we will always assume that $n \geq 3$. 
\end{notn}

We now define a complexity on $(d,n)$-branched covers.

\begin{defn} \label{defn-complex}
 The \emph{complexity} of a $(d,n)$-branched cover is defined as $d$ times the hyperbolic area of the complement of its branching set in $S^2$.
\end{defn}

Using the Gauss-Bonnet theorem, we can easily find a formula for the complexity of a $(d,n)$-branched cover.

\begin{lem} \label{lem-area-complex}\hfill
\begin{enumerate}
 \item The hyperbolic area of the complement of $n \geq 3$ points in the sphere equals $2\pi(n-2)$. 
 \item  The complexity of a $(d,n)$-branched cover equals  $2\pi d (n-2)$.
\end{enumerate}
\end{lem}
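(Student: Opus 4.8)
The plan is to reduce part (2) to part (1) and then compute the hyperbolic area of an $n$-punctured sphere using Gauss-Bonnet. Part (2) is immediate once (1) is known: by Definition \ref{defn-complex} the complexity of a $(d,n)$-branched cover is $d$ times the hyperbolic area of the complement of its branching set, and this branching set consists of $n\ge 3$ points of $S^2$, so the complexity equals $d\cdot 2\pi(n-2)=2\pi d(n-2)$. Everything therefore reduces to showing that the hyperbolic area of $X:=S^2\setminus\{y_1,\dots,y_n\}$ equals $2\pi(n-2)$, and I would do this by applying the Gauss-Bonnet Theorem \ref{G-B} to suitable compact truncations of $X$.

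First I would fix the geometry. Since $n\ge 3$, the Riemann surface $X$ is hyperbolic (by \cite[Theorem 27.12]{Forster}), so it carries a unique complete conformal metric of constant curvature $K\equiv -1$; as $X$ has finite type (genus $0$ with $n$ punctures), this metric has finite total area and each puncture is a cusp. For small $\varepsilon>0$ let $M_\varepsilon\subset X$ be the compact subsurface obtained by deleting from $X$, at each of the $n$ cusps, the embedded horoball neighborhood bounded by the horocycle of length $\varepsilon$; for $\varepsilon$ small these $n$ horoball neighborhoods are embedded and pairwise disjoint, so $M_\varepsilon$ is compact, orientable, with smooth boundary $\partial M_\varepsilon$ a disjoint union of $n$ horocycles each of length $\varepsilon$, and $\chi(M_\varepsilon)=\chi(X)=2-n$ (deleting collar neighborhoods of the ends does not change the Euler characteristic).

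Next I would apply Theorem \ref{G-B} to $M_\varepsilon$. Since $K\equiv -1$ on $M_\varepsilon$, it becomes
\[
\int_{\partial M_\varepsilon} \kappa_g\,\mathrm{d}s-\operatorname{Area}(M_\varepsilon)=2\pi(2-n).
\]
Each boundary component is a horocycle, which has constant geodesic curvature of absolute value $1$, so $\bigl|\int_{\partial M_\varepsilon}\kappa_g\,\mathrm{d}s\bigr|\le n\varepsilon$. On the other hand, a direct computation in the upper half-plane model shows that the horoball neighborhood cut off by a horocycle of length $\varepsilon$ has area exactly $\varepsilon$, so $0\le \operatorname{Area}(X)-\operatorname{Area}(M_\varepsilon)\le n\varepsilon$. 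Letting $\varepsilon\to 0$ in the displayed identity yields $-\operatorname{Area}(X)=2\pi(2-n)$, that is, $\operatorname{Area}(X)=2\pi(n-2)$, which is part (1), and the lemma follows.

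The only real obstacle is the passage from the compact-with-boundary form of Gauss-Bonnet in Theorem \ref{G-B} to the noncompact surface $X$: one must know that the ends of $X$ are genuine cusps, so that the truncations $M_\varepsilon$ exist with the stated properties, and one must control the boundary integral — which is exactly what the facts that a horocycle has $|\kappa_g|\equiv 1$ and that the truncating horocycles have length $\varepsilon$ together provide. As a consistency check, when $n=3$ the surface $X$ is the thrice-punctured sphere, which is the metric double of an ideal hyperbolic triangle of area $\pi$, so $\operatorname{Area}(X)=2\pi=2\pi(3-2)$; more generally, the $n$-punctured sphere is the double of an ideal $n$-gon, whose hyperbolic area is $(n-2)\pi$, in agreement with the answer above.
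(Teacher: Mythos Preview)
Your proof is correct and follows the same Gauss--Bonnet approach as the paper. The paper simply writes $A=-2\pi\chi(M_n)=2\pi(n-2)$ in one line and deduces (2) from the definition, whereas you additionally supply the horocycle-truncation limiting argument that justifies applying Theorem~\ref{G-B} (stated only for surfaces with compact closure and smooth boundary) to the noncompact cusped surface $X$.
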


\begin{proof} \hfill
\begin{enumerate}
 \item Let $M_n$ be the complement of $n \geq 3$ points in the sphere, and let $A$ be its hyperbolic area. By the Gauss-Bonnet theorem (Theorem \ref{G-B}), $A=-2\pi \chi(M_n)=-2\pi (\chi(S^2)-n)=2\pi (n-2)$.
 \item This follows from the definition of complexity.
\end{enumerate}
\end{proof}

We now define the main object of study of this paper.

\begin{defn} \label{defn-simple-complex}
 The  \emph{simple complexity} of a Riemann surface $M$ is the infimum of the complexities of all \emph{simple} branched covers of $M$ to $S^2$.
\end{defn}

\section{The main theorems}

We will need the following non-existence result to prove one of our main theorems.

\begin{lem} \label{lem-no-(1-n)-cov}
There is no $(1,n)$-branched cover with $n \geq 3$.
\end{lem}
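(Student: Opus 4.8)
The plan is to derive a contradiction from the Riemann--Hurwitz formula. A $(1,n)$-branched cover is a branched cover $p \colon M \to S^2$ of degree $d = 1$ with branching set $B \subset S^2$ of cardinality $n \geq 3$. First I would observe that a degree-one branched cover between Riemann surfaces is forced to be very rigid: by Lemma \ref{lem-counting mult}, for every $y \in S^2$ we have $\sum_{x \in p^{-1}(y)} k_x = d = 1$, and since each $k_x \geq 1$ is a positive integer, this means $p^{-1}(y)$ consists of a single point $x$ with $k_x = 1$. In particular, no point of $M$ has ramification index greater than $1$, so the ramification set $R$ is empty and the total ramification index satisfies $\mathcal{R}_p = 0$; consequently the branching set $B = p(R)$ is also empty, so $n = 0$.

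This already contradicts the hypothesis $n \geq 3$, which proves the lemma. Alternatively, and perhaps more in the spirit of the surrounding discussion, I would phrase the contradiction through the Riemann--Hurwitz formula of Theorem \ref{RH-form-gen}: with $d = 1$ it reads $2 - 2g_M = 2 - 2g_{S^2} - \mathcal{R}_p = 2 - \mathcal{R}_p$, so $\mathcal{R}_p = 2 g_M \geq 0$, and $B$ has cardinality at most $\mathcal{R}_p = 2g_M$. But in fact each branch point must be the image of a ramification point with index at least $2$, so each branch point contributes at least $1$ to $\mathcal{R}_p$; combined with $\mathcal{R}_p = 0$ from the degree-one count above, we again get $n = 0$, contradicting $n \geq 3$.

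I expect no real obstacle here: the only thing to be careful about is making explicit why degree $d = 1$ forces $k_x = 1$ for all $x$ (the decomposition of a positive integer $1$ as a sum of positive integers is trivial), and then either invoking $B = p(R) = p(\emptyset) = \emptyset$ directly or routing through Riemann--Hurwitz. I would write the short version: degree one forces $\mathcal{R}_p = 0$, hence $R = \emptyset$, hence $B = \emptyset$, so $n = 0 < 3$, a contradiction.
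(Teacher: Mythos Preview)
Your proposal is correct and its core argument is essentially identical to the paper's: degree $d=1$ forces $k_x=1$ for every $x$ via Lemma~\ref{lem-counting mult}, hence $R=\emptyset$, hence $B=\emptyset$, contradicting $n\geq 3$. The opening mention of Riemann--Hurwitz and the alternative route through Theorem~\ref{RH-form-gen} are unnecessary detours; the paper (like your ``short version'') uses only the direct counting argument.
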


\begin{proof}
 Suppose that $p \colon M \to S^2$ is a $(1,n)$-branched cover with $n \geq 3$. Fix a point $y \in S^2$. By Lemma \ref{lem-counting mult}, \[\displaystyle \sum_{x\in p^{-1}(y)} k_x=1.\] Since $k_x \geq 1$ for all $x \in M$, then the last equality implies that the set $p^{-1}(y)$ consists of a unique point $x$ whose ramification index equals $1$. In other words, for each $y \in S^2$, the set $p^{-1}(y)$ contains no ramification points. This means that the branching set of $p$ is empty, which contradicts that $p$ is a $(1,n)$-branched cover with $n \geq 3$.
\end{proof}

\begin{thm} \label{computing-simple-complex}
The simple complexity of a connected, closed, orientable Riemann surface of genus $g \geq 1$ is equal to $8\pi g$.
\end{thm}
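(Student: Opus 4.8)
The plan is to establish the two inequalities ``simple complexity of $M$ is $\ge 8\pi g$'' and ``simple complexity of $M$ is $\le 8\pi g$'' separately. Because the complexity of a $(d,n)$-branched cover equals $2\pi d(n-2)$ by Lemma~\ref{lem-area-complex}, the complexities of all simple branched covers of $M$ form a discrete subset of $\mathbb{R}$, so the infimum defining the simple complexity is actually attained; hence the two bounds together give the claimed equality.

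\textbf{Lower bound.} Let $p\colon M\to S^2$ be an arbitrary simple branched cover, of degree $d$ and with branching set of cardinality $n\ge 3$. First observe that $d\ge 2$: a degree-one branched cover is a biholomorphism and so has empty branching set, contradicting $n\ge 3$ (this is exactly Lemma~\ref{lem-no-(1-n)-cov}). Next, Theorem~\ref{RH-form-sphere} gives $2-2g=2d-n$, so
\[
 n-2 \;=\; 2d+2g-4 \;=\; 2(d+g-2),
\]
and by Lemma~\ref{lem-area-complex}(2) the complexity of $p$ equals
\[
 2\pi d\,(n-2) \;=\; 4\pi\, d\,(d+g-2).
\]
Now use the elementary identity $d(d+g-2)-2g=(d-2)(d+g)$. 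Since $d\ge 2$ and $d+g\ge 3>0$, the right-hand side is $\ge 0$, so the complexity of $p$ is at least $4\pi\cdot 2g=8\pi g$, with equality precisely when $d=2$. As $p$ was arbitrary, the simple complexity of $M$ is $\ge 8\pi g$.

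\textbf{Upper bound.} It suffices to produce a single simple branched cover $p\colon M\to S^2$ of degree $2$: for such a cover Theorem~\ref{RH-form-sphere} forces $n=2d+2g-2=2g+2\ge 3$, and by Lemma~\ref{lem-area-complex}(2) its complexity equals $2\pi\cdot 2\cdot\big((2g+2)-2\big)=8\pi g$, so the infimum is $\le 8\pi g$. For $g=1$ one uses the Weierstrass $\wp$-function, which realizes $M=\mathbb{C}/\Lambda$ as a double cover of $S^2$ branched over the four images of the $2$-torsion points; for $g=2$ the canonical map realizes $M$ as a double cover of $S^2$ branched over six points. In general one wants a degree-$2$ holomorphic map $M\to S^2$, for instance the coordinate projection $(x,y)\mapsto x$ on a hyperelliptic model $y^2=\prod_{i=1}^{2g+2}(x-a_i)$, which has genus $g$ and exactly $2g+2$ simple branch points.

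\textbf{Main obstacle.} The bookkeeping in the lower bound is routine once Theorem~\ref{RH-form-sphere} and Lemma~\ref{lem-no-(1-n)-cov} are available; the substantive step is the upper bound, i.e.\ the construction of a degree-$2$ branched cover of $M$ onto $S^2$. This is immediate for $g=1$ and $g=2$, but for $g\ge 3$ not every Riemann surface of genus $g$ is hyperelliptic, so one must either restrict to those $M$ admitting a degree-$2$ cover, or read the statement as one about the underlying genus, obtained by pulling back the complex structure of $S^2$ along a topological degree-$2$ simple branched cover of the genus-$g$ surface. This is the step where I would expect to have to be most careful.
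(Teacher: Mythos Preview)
Your argument is essentially the paper's: express the complexity of a simple $(d,n)$-cover as $4\pi d(d+g-2)$ via Theorem~\ref{RH-form-sphere} and Lemma~\ref{lem-area-complex}, rule out $d=1$ with Lemma~\ref{lem-no-(1-n)-cov}, and attain the value $8\pi g$ at $d=2$ by the double cover branched over $2g+2$ points. The only cosmetic difference is in the lower bound: the paper checks that $f(d)=4\pi d(d+g-2)$ is increasing on $d\ge 1$ by computing $f'(d)=4\pi(2d+g-2)>0$, whereas you factor $d(d+g-2)-2g=(d-2)(d+g)\ge 0$; both yield the same minimum at $d=2$.

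On your ``main obstacle'': the paper handles the upper bound exactly as you anticipated, simply asserting that ``it is well-known that every closed connected orientable surface is a double branched cover of $S^2$, branched over $n=2(g+1)\ge 3$ points.'' This is the topological statement; the paper does not construct a holomorphic degree-$2$ map on a given non-hyperelliptic $M$ of genus $g\ge 3$. Since the complexity $2\pi d(n-2)$ and the realizability of a branch datum (as framed throughout the paper via the Hurwitz problem) depend only on the genus of $M$, your second reading---treating the simple complexity as an invariant of the underlying genus, obtained by pulling back the complex structure along a topological branched cover---is the intended one. So your caution is warranted, but you should not expect the paper to do more at this step than invoke the topological existence of the double cover.
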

\begin{proof}
Let $M$ be a Riemann surface of genus $g \geq 1$ and let $p \colon M \to S^2$ be a simple $(d,n)$-branched cover. By Theorem \ref{RH-form-sphere}, $2-2g=2d-n$, so $n=2(d+g-1)$. Combining this with Lemma \ref{lem-area-complex} $(2)$, we get that  the complexity of $p \colon M \to S^2$ is equal to $2\pi d (n-2)=4\pi d (d+g-2)$.

For a fixed $g\geq 1$, consider the function $f(d)=4\pi d (d+g-2)$, defined for $d \geq 1$. Observe that, by definition, $f(d)$ is the complexity of a simple $(d,n)$-branched cover. The function $f(d)$ is increasing for $d \geq 1$ because $f'(d)=4\pi(d+g-2)+4\pi d=4\pi(2d+g-2)\geq 4\pi$. [The last inequality holds because $d \geq 1$ and $g \geq 1$.]

Let $d_\text{min}$ be the minimal value of $d \geq 1$ such that there exists a simple $(d,n)$-branched cover $M \to S^2$ with $n \geq 3$. The previous paragraph shows  that the simple complexity of $M$ is equal to $f(d_\text{min})$.

By Lemma \ref{lem-no-(1-n)-cov}, then there is no $(1,n)$-branched cover $M \to S^2$. This means that $d_\text{min}>1$. On the other hand, it is well-known that every closed connected  orientable surface is a double branched cover of $S^2$, branched over $n=2(g+1)\geq 3$ points in $S^2$. This means that $d_\text{min}=2$, and so $f(d_\text{min})=f(2)=8\pi g$.
\end{proof}

We would like to have a result similar to Theorem \ref{computing-simple-complex} for computing the complexity of a Riemann surface. We did not succeed in finding such a result, which is a difficult problem that is related to the Hurwitz problem (see the introduction of this paper). However, we have made some progress in this direction.

We now recall some definitions from the Introduction of this paper. 

\begin{defn} \label{simple-complex}
 The  \emph{complexity} of a Riemann surface $M$ is the infimum of the complexities of all branched covers of $M$ to $S^2$.
\end{defn}

A \emph{branch datum} is a $4$-tuple $(M,n,d,\Pi)$ such that:
\begin{itemize}
 \item $M$ is a Riemann surface,
 \item $n \geq 0$ and $d \geq 1$ are integers,
 \item $\Pi$ is a collection of $n$ partitions of the integer $d$. 
\end{itemize}

The \emph{total length} of a branch datum $(M,n,d,\Pi)$ is defined as the sum of the lengths of the partitions in the collection $\Pi$.

A branch datum $(M,n,d,\Pi)$ is \emph{realizable} if there exists a branched cover $p \colon M \to S^2$  such that:
\begin{itemize}
 \item $n$ is the cardinality of the branching set $B \subset S^2$
 \item $d$ is the degree of the cover $p$.
 \item $\Pi=(\Pi_1,\Pi_2,\cdots,\Pi_n)$ is the collection such that $\Pi_i$ is the partition of $d$ given by the degrees of the points on the pre-image of the $i^{th}$ branch point in $B$.
\end{itemize}

\begin{thm} \label{computing-complex}
Let $M$ be a connected, closed, orientable Riemann surface of genus $g \geq 1$. Let $m_{\text{min}}$ be the minimum total length of a branch datum realizable by a branched cover  $p \colon M \to S^2$. Then the complexity of $M$ is equal to $2\pi(m_{\text{min}}+2g-2)$.
\end{thm}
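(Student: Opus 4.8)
The plan is to show that the complexity of \emph{any} branched cover $p\colon M\to S^2$ is completely determined by the total length $m$ of its associated branch datum together with the genus $g$, through the identity $2\pi d(n-2)=2\pi(m+2g-2)$; the theorem then follows at once by taking the infimum over all realizable branch data.

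First I would combine the two formulas already at hand. By Lemma \ref{lem-area-complex}(2), the complexity of a $(d,n)$-branched cover $p$ equals $2\pi d(n-2)$. Let $(M,n,d,\Pi)$ be the branch datum associated to $p$, with branching set $B\subset S^2$. Since the length $m_i$ of the partition $\Pi_i$ is exactly the number of points of $p^{-1}(y_i)$, where $y_i$ is the $i$-th branch point, the total length $m=\sum_{i=1}^n m_i$ is equal to the cardinality of $p^{-1}(B)$. Then Theorem \ref{RH-form-alt} reads $(2-2g)-m=d(2-n)$, which rearranges to $d(n-2)=m+2g-2$. Hence the complexity of $p$ is $2\pi(m+2g-2)$, as claimed.

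Next I would check that, for $g\ge1$, every branched cover $p\colon M\to S^2$ automatically has $n\ge3$, so that its complexity is defined and the discussion is consistent with the standing Notation. This is a quick Riemann--Hurwitz check using $d(n-2)=m+2g-2$ and $m\ge n$ (each partition of $d\ge1$ has length at least $1$): for $n=2$ one gets $0=m+2g-2\ge2g>0$; for $n=1$, $-d=m+2g-2\ge2g-1\ge1$; and for $n=0$ (so $m=0$), $-2d=2g-2\ge0$ --- each contradicting $d\ge1$ together with $g\ge1$. It follows that the set of total lengths of branch data realizable by some branched cover $M\to S^2$ consists of positive integers, and it is nonempty since $M$ is a double branched cover of $S^2$ over $2(g+1)\ge3$ points. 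Therefore this set attains a minimum, which is $m_{\min}$, and the complexity of $M$ --- the infimum of $2\pi(m+2g-2)$ over all such data --- equals $2\pi(m_{\min}+2g-2)$.

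I expect the only point requiring care to be the bookkeeping in the last step: one must ensure that imposing $n\ge3$ (which is needed for the complexity to make sense at all) excludes no realizable branch datum, and that the minimum $m_{\min}$ is genuinely attained. Both are handled by the Riemann--Hurwitz inequalities above, and there is no analytic or combinatorial difficulty beyond Lemma \ref{lem-area-complex} and Theorem \ref{RH-form-alt}.
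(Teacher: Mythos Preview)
Your proof is correct and follows essentially the same approach as the paper: both combine Lemma~\ref{lem-area-complex}(2) with Theorem~\ref{RH-form-alt} to rewrite the complexity $2\pi d(n-2)$ as $2\pi(m+2g-2)$, and then minimize over realizable data. Your argument is in fact slightly more careful than the paper's, since you explicitly verify that $n\ge 3$ holds automatically for $g\ge 1$ and that the infimum is attained; the paper relies on its standing Notation and leaves these points implicit.
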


\begin{proof}
Let $p \colon M \to S^2$ be a $(d,n)$-branched cover an let $m$ be the total length of the branch datum associated to $p$. By Lemma \ref{lem-area-complex}, the complexity of $p$ is equal to $2\pi d (n-2)$. On the other hand, by Theorem \ref{RH-form-alt}, we have that \[ \chi(M)-m=d(\chi(S^2)-n).\] Therefore, \[m=\chi(M)-d(2-n)=2-2g+d(n-2),\] and so \[m+2g-2=d(n-2).\] Hence, the complexity of $p$ is equal to $2\pi (m+2g-2)$. Since the genus $g \geq 1$ is fixed, then the minimal complexity of a branched cover $p \colon M \to S^2$ is equal to  $2\pi(m_{\text{min}}+2g-2)$. The conclusion now follows from the definition of complexity of a Riemann surface.
\end{proof}

 \bibliographystyle{alpha}
\bibliography{biblio}

\end{document}